\newtheorem{theorem}{Theorem}
\newtheorem{definition}[theorem]{Definition}
\newtheorem{lemma}[theorem]{Lemma}
\newtheorem{remark}[theorem]{Remark}
\newenvironment{proof}[1][Proof]{\noindent\textbf{#1.} }{\ \rule{0.5em}{0.5em}}
\numberwithin{equation}{section}
\numberwithin{theorem}{section}
\begin{document}

\title{Stochastic derivative and heat type PDEs}

\author{Constantin Udri\c{s}te\thanks{University \textit{Politehnica} of \ Bucharest, Faculty of Applied Sciences,
Deptartment of Mathematics-Informatics I, eMail: udriste@mathem.pub.ro}, Virgil Damian\thanks{University \textit{Politehnica} of \ Bucharest, Faculty of Applied Sciences,
Deptartment of Mathematics-Informatics I, eMail: vdamian@mathem.pub.ro} \,
and Ionel \c{T}evy\thanks{University \textit{Politehnica} of \ Bucharest, Faculty of Applied Sciences,
Deptartment of Mathematics-Informatics I, eMail: tevy@mathem.pub.ro}}

\date{}

\maketitle

\begin{abstract}
In this paper we address again the problem of the connection between multitime Brownian sheet and heat type PDEs.
The main results include: the volumetric character of the solutions of the forward (backward) diffusion-like PDEs; 
the forward mean value of a Brownian process as the solution of the forward heat PDE; 
the backward mean value of a Brownian process as the solution of the backward heat PDE;  
the multitime stochastic processes with volumetric dependence; the stochastic partial derivative of a stochastic process
with respect to a multitime Wiener process; Hermite polynomials stochastic processes; 
union of Tzitzeica hypersurfaces (constant level sets of multitime
stochastic processes with volumetric dependence). The original results permit to extend 
the complete integrability theory to multitime stochastic differential systems, using path independent
curvilinear integrals and volumetric dependence.
\end{abstract}

\section{Introduction}

The papers \cite{cawa:75}, \cite{do:89} have studied stochastic curvilinear
integrals along all sufficiently smooth curves in $\mathbb{R}_{+}^{2}$. The
most simple situation is those of increasing curves. 

Our papers \cite{ud:07}, \cite{udda:11}, \cite{UDMT} have extended this point of view to stochastic curvilinear
integrals in $\mathbb{R}_{+}^{m}$ and to completely integrable stochastic
differential systems. These research trends and the original results are based on \textit{It\^{o}-Udri\c{s}te stochastic calculus rules} 
\begin{equation*}
dW_{t}^{a}~dW_{t}^{b}=\delta^{ab}\,c_{\alpha }(t)dt^{\alpha },dW_{t}^{a}~dt^{\alpha }=dt^{\alpha}~dW_{t}^{a}=0,dt^{\alpha }\,dt^{\beta }=0,
\end{equation*}%
where $a,b=\overline{1,d};\alpha ,\beta =\overline{1,m}$, the parameter $%
t=(t^{1},...,t^{m})\in \mathbb{R}_{+}^{m}$ means the \textit{multitime}, $%
\delta ^{ab}$ is the \textit{Kronecker symbol}, $v = t^{1}\cdots t^{m}$ is the volume of the hyperparallelepiped $\Omega_{0t}\subset R^m_+$,
$c_{\alpha }(t)=\frac{\partial v}{\partial t^{\alpha }}$ and the tensorial
product $\delta ^{ab}\,c_{\alpha }(t)$ represents the \textit{correlation
coefficients}. These formulas can be extended to stochastic processes on Riemannian manifolds
or on manifolds with a fundamental tensor of type $(2,0)$ that determines the correlation coefficients.
A similar single-time theory can be found in the paper \cite{udda:11a}. Also some ideas have the roots in the basic book \cite{pr}
containing the single-time theory of Brownian motion, stochastic integral and differential stochastic equations.

The main objective of the paper is the connection between multitime Brownian sheet and heat type PDEs.
Section 2 shows that the solutions of the forward diffusion-like PDEs system have a volumetric character
and that the forward mean value of a Brownian process is the solution of the forward heat PDE.
Section 3 proves that the solutions of the backward diffusion-like PDEs system have a volumetric character
and that the backward mean value of a Brownian process is the solution of the backward heat PDE.
Section 4 describes the differentiable multitime stochastic processes as sums of series of Hermite polynomials.
Section 5 studies unions of Tzitzeica hypersurfaces and their connection to differentiable multitime stochastic
processes. Some related deterministic ideas can be found in the papers \cite{LC1}, \cite{udarci:10}, \cite{udarci:11}.

\section{Forward diffusion-like PDEs system}

Let us now consider a real-valued function 
$f\left( t,x\right), \,t = (t^1,...,t^m) \in \mathbb{R}_{+}^{m},\,x = (x^1,...,x^d) \in \mathbb{R}^d$, with $f(t,0) = 0$,
which has continuous partial derivatives of the first order with respect to $t^{\alpha }$, and of the second order in $x^a$.
If
$$W_{t} = (W^a_t),\,\,t=(t^{1},...,t^{m})\in \mathbb{R}_{+}^{m},\,a = \overline{1,d}$$
is a multitime vectorial Wiener process \cite{ud:07}, then the function $f$ defines a 
stochastic process $f\left( t,W_{t}\right) ,~t\in \mathbb{R}_{+}^{m}$.
Let us accept (see \cite{all}) that $f(t,W_t)$ is a solution of the {\it forward diffusion-like PDEs system}
\begin{equation}
\frac{\partial f}{\partial t^{\alpha }}\left(t,W_{t}\right) - \frac{1}{2}\,c_{\alpha }(t)\,\Delta_x f (t,W_t) =0,\text{ for }\alpha =\overline{1,m},  \label{1.4}
\end{equation}
where
$$\Delta_x f=\frac{\partial ^{2}f}{\partial x^{a}\partial x^{b}}\, \delta ^{ab}.$$

\begin{theorem} The solution $f$ of the forward diffusion-like PDEs system (1)
depends on the point $(t^{1},...,t^{m})$ only through the product of
components $v=t^{1}\cdots t^{m}$, i.e., it is a function of the volume $%
v=t^{1}\cdots t^{m}$ of $\Omega _{0t}\subset \mathbb{R}_{+}^{m}$.
\end{theorem}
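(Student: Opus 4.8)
The plan is to exploit the explicit form of the coefficients $c_{\alpha}(t)=\partial v/\partial t^{\alpha}$ in order to convert the system into a symmetry condition and then integrate that condition. First I would observe that, although $(\ref{1.4})$ is written along the random path $x=W_{t}$, the components of the multitime Wiener process $W_{t}$ are Gaussian with full support, so by the assumed continuity of the partial derivatives the system holds as a genuine pointwise identity
$$\frac{\partial f}{\partial t^{\alpha}}(t,x)=\frac{1}{2}\,c_{\alpha}(t)\,\Delta_x f(t,x),\qquad \alpha=\overline{1,m},$$
for every $(t,x)$ in the interior of $\mathbb{R}_{+}^{m}\times\mathbb{R}^{d}$.

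Next I would compute the coefficients: for $t^{\alpha}>0$ one has $c_{\alpha}(t)=\prod_{\beta\neq\alpha}t^{\beta}=v/t^{\alpha}$. Substituting this and multiplying the $\alpha$-th equation by $t^{\alpha}$ yields
$$t^{\alpha}\,\frac{\partial f}{\partial t^{\alpha}}(t,x)=\frac{1}{2}\,v\,\Delta_x f(t,x)\qquad(\text{no sum on }\alpha).$$
The right-hand side is manifestly independent of $\alpha$, so the key consequence is the chain of equalities $t^{\alpha}\,\partial f/\partial t^{\alpha}=t^{\beta}\,\partial f/\partial t^{\beta}$ for all $\alpha,\beta$ (no summation).

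To remove the factors $t^{\alpha}$ I would pass to logarithmic time coordinates $s^{\alpha}=\ln t^{\alpha}$, legitimate on the open orthant. Since $t^{\alpha}\,\partial/\partial t^{\alpha}=\partial/\partial s^{\alpha}$, the previous identities become $\partial f/\partial s^{\alpha}=\partial f/\partial s^{\beta}$ for all $\alpha,\beta$; equivalently $f$ is annihilated by every directional derivative $\partial/\partial s^{\alpha}-\partial/\partial s^{\beta}$. These vectors span the hyperplane $\{\sum_{\alpha}\xi^{\alpha}=0\}$, which is exactly the tangent space to the level sets of $u:=s^{1}+\cdots+s^{m}$. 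Hence, at each fixed $x$, the function $f$ is constant along those connected level sets and therefore depends on $s$ only through $u$. Since $u=\ln t^{1}+\cdots+\ln t^{m}=\ln(t^{1}\cdots t^{m})=\ln v$, the function $f$ depends on $t$ only through $v=t^{1}\cdots t^{m}$, which is the assertion; the boundary faces where some $t^{\alpha}=0$ are recovered by continuity.

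The main obstacle I anticipate is not the algebra, which is short, but the two justifications flanking it: upgrading $(\ref{1.4})$ from an identity along the sample path $x=W_{t}$ to a pointwise PDE in $(t,x)$, and controlling the boundary $\partial\mathbb{R}_{+}^{m}$ where the logarithmic substitution and the factorization $c_{\alpha}=v/t^{\alpha}$ degenerate. The first is handled by the full support of $W_{t}$ together with the continuity hypothesis, the second by a limiting argument as $t^{\alpha}\to 0$. I note that the normalization $f(t,0)=0$ plays no role here; it will enter only when $f$ is later identified with a heat-kernel mean value.
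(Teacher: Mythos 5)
Your proof is correct and follows essentially the same route as the paper: both eliminate $\Delta_x f$ between pairs of equations to obtain the first-order relations $c_{\beta}(t)\,\partial f/\partial t^{\alpha}=c_{\alpha}(t)\,\partial f/\partial t^{\beta}$ (your $t^{\alpha}\,\partial f/\partial t^{\alpha}=t^{\beta}\,\partial f/\partial t^{\beta}$ is the same statement, since $c_{\alpha}=v/t^{\alpha}$), and then conclude that $f$ depends on $t$ only through $v=t^{1}\cdots t^{m}$. The only difference is that you supply justifications the paper omits: the upgrade of the identity along $x=W_{t}$ to a pointwise PDE via full support of the Gaussian law, and the explicit integration of the first-order system in logarithmic coordinates, where the paper simply asserts that its general solution is $\varphi(t^{1}\cdots t^{m},W_{t})$.
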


\begin{proof} From (1) it follows 
\begin{equation*}
c_{\beta }(t)\,\frac{\partial f}{\partial t^{\alpha }}\left( t,W_{t}\right)
=c_{\alpha }(t)\,\frac{\partial f}{\partial t^{\beta }}\left( t,W_{t}\right)
,\,\alpha \not=\beta .
\end{equation*}%
The general solution of this PDEs system is $f(t)=\varphi (t^{1}\cdots
t^{m},W_{t}),\,t\in \mathbb{R}_{+}^{m}$.

If $v=t^{1}\cdots t^{m}$ is the volume of $\Omega _{0t}\subset \mathbb{R}%
_{+}^{m}$ and $g\left( v,x\right) \overset{def}{=}\varphi \left( t^{1}\cdots
t^{m},x\right) $ has continuous partial derivatives of the first order in $v$
and of the second order in $x$, then from (1) it follows that $g$
satisfies the {\it forward multitime heat PDE} 
\begin{equation}
\frac{\partial g}{\partial v} - \frac{1}{2}\Delta g =0.  \label{1.5}
\end{equation}
\end{proof}

\begin{lemma}[see \cite{all}] Let $$W^x_{t} = (W^a_t)^x,\,\,t=(t^{1},...,t^{m})\in \mathbb{R}_{+}^{m},\,x \in R^d,\,a = \overline{1,d}$$
be a family of Brownian sheets with $P\{W^x_{t} = x\}=1$ for $t \in \partial R^m_+$. If $f: R^d \to R$ is a continuous 
bounded function and $u(t,x) = E[f(W^x_t)]$ is the forward mean value, then $u(t,x)$ 
is a unique bounded solution of forward heat type PDEs
$$\frac{\partial u}{\partial t^\alpha} (t,x)= \frac{1}{2}\,c_\alpha(t) \Delta_x u (t,x),\,\, t \in \stackrel{\circ}{R^m_+};$$
$$u(t,x) = f(x), \,\,t \in \partial R^m_+.$$
\end{lemma}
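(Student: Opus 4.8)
The plan is to exploit the product structure of the multitime parameter and reduce the statement to the classical single-time result.The plan is to reduce the multitime problem to the classical one-parameter heat equation by exploiting the volumetric structure already identified in Theorem 1. First I would pin down the law of the Brownian sheet $W^x_t$. Using the It\^o--Udri\c ste rules $dW^a_t\,dW^b_t=\delta^{ab}c_\alpha(t)\,dt^\alpha$ and summing the increment of quadratic variation, $c_\alpha(t)\,dt^\alpha=\frac{\partial v}{\partial t^\alpha}\,dt^\alpha=dv$, so the intrinsic clock of the sheet is exactly the volume $v=t^1\cdots t^m$. Starting from the boundary datum $W^x_t=x$ on $v=0$ (where $P\{W^x_t=x\}=1$), each component $W^a_t$ is therefore Gaussian with mean $x^a$ and variance $v$, with independent components; that is, $W^x_t\sim N(x,vI_d)$, and the law of $W^x_t$ depends on $t$ only through $v$.

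Next I would write the forward mean value as a heat-kernel convolution. With the Gaussian density just obtained,
$$u(t,x)=E[f(W^x_t)]=\int_{R^d} f(y)\,\frac{1}{(2\pi v)^{d/2}}\exp\!\left(-\frac{|y-x|^2}{2v}\right)dy=:U(v,x),$$
which already exhibits $u$ as a function of $v$ and $x$ alone, in agreement with Theorem 1. Since $U$ is the Gaussian convolution of $f$ with the kernel $p_v$ of ``time'' $v$, classical heat-kernel theory gives that $U$ is smooth for $v>0$ and satisfies the one-parameter forward heat PDE $\frac{\partial U}{\partial v}=\frac{1}{2}\Delta_x U$, together with $U(0^+,x)=f(x)$ because $f$ is continuous and bounded (approximation of the identity).

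The PDE system then follows by the chain rule: since $\frac{\partial v}{\partial t^\alpha}=c_\alpha(t)$,
$$\frac{\partial u}{\partial t^\alpha}(t,x)=\frac{\partial U}{\partial v}\,\frac{\partial v}{\partial t^\alpha}=\frac{1}{2}\,\Delta_x U\cdot c_\alpha(t)=\frac{1}{2}\,c_\alpha(t)\,\Delta_x u(t,x),\quad \alpha=\overline{1,m}.$$
The boundary condition $u(t,x)=f(x)$ for $t\in\partial R^m_+$ holds because $t\in\partial R^m_+$ forces some $t^\alpha=0$, hence $v=0$ and $U(0,x)=f(x)$; this is also immediate from $P\{W^x_t=x\}=1$ there. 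Boundedness of $u$ is clear from $|u|\le\|f\|_\infty$.

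Finally, for uniqueness I would argue that any bounded solution of the system is volumetric. Eliminating $\Delta_x u$ between the equations yields the compatibility relations $c_\beta\,\partial_{t^\alpha}u=c_\alpha\,\partial_{t^\beta}u$, which, exactly as in the proof of Theorem 1, force a dependence on $t$ through $v$ alone, say $u=\tilde U(v,x)$; substituting back shows $\tilde U$ solves the single heat equation in $(v,x)$ with bounded datum $f$. The hard part is this uniqueness step: it rests on the classical uniqueness theorem for bounded solutions of the heat equation on $(0,\infty)\times R^d$ (Tychonoff--Widder type), which I would invoke after checking that $t\mapsto v$ sweeps out all of $(0,\infty)$ and that boundedness in $t$ transfers to boundedness in $v$. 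Granting that, $\tilde U=U$, and the representation $u=E[f(W^x_t)]$ is the unique bounded solution.
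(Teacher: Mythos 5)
Your proposal is correct, and its core is the same as the paper's: both rest on the explicit Gaussian kernel of the sheet, $K(t;x,y)=(2\pi\,t^1\cdots t^m)^{-d/2}\exp\bigl(-\|x-y\|^2/(2\,t^1\cdots t^m)\bigr)$, and the representation $u(t,x)=\int f(y)K(t;x,y)\,dy$. The packaging differs in a useful way. The paper verifies the multitime PDEs directly for the kernel $K$ and then differentiates under the integral sign in each $t^\alpha$; you instead collapse the multitime dependence to the volume $v=t^1\cdots t^m$ at the outset, quote the classical one-parameter heat-semigroup facts for $U(v,x)=(p_v*f)(x)$, and recover the system $\partial_{t^\alpha}u=\tfrac12 c_\alpha\Delta_x u$ by the chain rule $\partial v/\partial t^\alpha=c_\alpha(t)$. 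These are equivalent computations, but your route makes the volumetric character of $u$ (which the paper only states as a remark after the lemma) an explicit byproduct. The more substantive difference is uniqueness: the paper simply invokes ``the existence and uniqueness Theorem for a Cauchy problem associated to heat PDE,'' which strictly speaking concerns the single-time equation, without explaining how a bounded solution of the multitime system reduces to that setting. You supply exactly the missing step -- eliminating $\Delta_x u$ to get the compatibility relations $c_\beta\,\partial_{t^\alpha}u=c_\alpha\,\partial_{t^\beta}u$, concluding volumetric dependence as in Theorem 1, and then applying Tychonoff--Widder uniqueness for bounded solutions in the variable $v$. So your argument is, if anything, more complete than the paper's on the one point where the paper is vague; the only caveat is that this reduction needs the a priori smoothness of a candidate solution (continuous first $t$-derivatives, second $x$-derivatives), which is implicit in the notion of solution used throughout the paper.
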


\begin{proof}. Let $K(t;x,y)$ be the transition density of an $m$-parameter
$R^d$- valued Brownian sheet $W^x(t) = W^x(t^1, . . . , t^m)$, starting on $\partial R^m_+$ from $x \in  R^d$,
under $P$, and moving to $y \in R^d$ at multitime $t = (t^1, . . . , t^m) \in R^m_+$, i.e., 
$$K(t;x,y) = \frac{1}{dy}P[W^x(t) \in dy] =\frac{1}{(2\pi\,t^1\cdots t^m)^{d/2}}\exp\left(\frac{- ||x - y||^2}{2\,t^1\cdots t^m}\right).$$
This function is a solution of the forward heat type PDEs, i.e.,
$$\frac{\partial}{\partial t^\alpha} K(t;x,y) = \frac{1}{2}c_\alpha \,\Delta_x K(t;x,y).$$

Taking into account the definition of the mean value $u(t,x)$, the assumptions on the function $f$ 
and the differentiability of the improper integral function, we find
$$
\frac{\partial u}{\partial t^\alpha} = \frac{\partial}{\partial t^\alpha}\int_{R^d}f(y)\,K(t;x,y)\,dy = \int_{R^d} f(y) \frac{\partial}{\partial t^\alpha}K(t;x,y)\,dy
$$
$$= \frac{1}{2}c_\alpha \,\int_{R^d}f(y) \,\Delta_x K(t;x,y)\,dy$$
$$= \frac{1}{2}c_\alpha \,\Delta_x \int_{R^d}f(y) \,K(t;x,y)\,dy = \frac{1}{2}c_\alpha\, \Delta_x u.$$
By definition and by the continuity and boundedness of the function
$f$ we easily have $$\lim_{s\to t,\, y \to x}u(s, y) = u(t, x) = f(x),$$ for $t \in \partial R^m_+$
and $x \in R^d$. The existence and uniqueness Theorem for a Cauchy problem 
associated to heat PDE ends the proof.
\end{proof}

The mean value function u(t, x) is a function of the volume $v = t^1\cdots t^m$, i.e., $u(t,x) = g(v,x)$.

\section{Backward diffusion-like PDEs system}

Let us consider again a real-valued function $f\left( t,x\right) $ on $\mathbb{%
R}_{+}^{m}\times \mathbb{R}^d$, with $f(t,0)=0$, which has continuous partial
derivatives of the first order with respect to $t^{\alpha }$, $\alpha =%
\overline{1,m}$ and of the second order in $x^a$. If
$$W_{t} = (W^a_t),\,\,t=(t^{1},...,t^{m})\in \mathbb{R}_{+}^{m},\,a = \overline{1,d}$$
is a multitime vectorial Wiener process, then the function $f$ defines a stochastic process 
\begin{equation*}
y_{t}=f\left( t,W_{t}\right) ,~t\in \mathbb{R}_{+}^{m}.
\end{equation*}

By It\^{o}-Udri\c{s}te Lemma \cite{ud:07}, \cite{udda:11}, \cite{UDMT}, we can write 
\begin{equation}
dy_{t}=\left( \frac{1}{2}\,c_{\alpha }\left( t\right) \,\Delta_x f (t,W_t) + \frac{\partial f}{\partial
t^{\alpha }}\left( t,W_{t}\right) \right) dt^{\alpha }+\frac{\partial f}{%
\partial x^a}\left( t,W_{t}\right) dW^a_{t},  \label{1.3}
\end{equation}%
where
$$\Delta_x f=\frac{\partial ^{2}f}{\partial x^{a}\partial x^{b}}\, \delta ^{ab}.$$

In order that the stochastic process $y_{t}$ be a martingale, the drift coefficients in formula (1) must vanish,
i.e., $f$ is a solution of the {\it backward diffusion-like system} 
\begin{equation}
\frac{\partial f}{\partial t^{\alpha }}\left(t,W_{t}\right) + \frac{1}{2}\,c_{\alpha }\left( t\right)\,\Delta_x f (t,W_t) = 0,
\text{ for }\alpha =\overline{1,m}.  \label{1.4}
\end{equation}

\begin{theorem} The solution $f$ of the backward diffusion-like system (4)
depends on the point $(t^{1},...,t^{m})$ only through the product of
components $v=t^{1}\cdots t^{m}$, i.e., it is a function of the volume $%
v=t^{1}\cdots t^{m}$ of $\Omega _{0t}\subset \mathbb{R}_{+}^{m}$.
\end{theorem}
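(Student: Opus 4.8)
The plan is to reduce the backward system (4) to exactly the same first-order compatibility condition that governed the forward case in Theorem 1, exploiting the fact that the sign of the Laplacian term becomes irrelevant once that term is eliminated. First I would fix two distinct indices $\alpha\neq\beta$ and write (4) twice, as $\frac{\partial f}{\partial t^\alpha}=-\frac{1}{2}c_\alpha\,\Delta_x f$ and $\frac{\partial f}{\partial t^\beta}=-\frac{1}{2}c_\beta\,\Delta_x f$; multiplying the first by $c_\beta(t)$ and the second by $c_\alpha(t)$ and subtracting cancels the common factor $-\frac{1}{2}\Delta_x f$, yielding
$$c_\beta(t)\,\frac{\partial f}{\partial t^\alpha}\left(t,W_t\right)=c_\alpha(t)\,\frac{\partial f}{\partial t^\beta}\left(t,W_t\right),\qquad \alpha\neq\beta.$$
This is precisely the system that appeared in the proof of Theorem 1, so the minus sign distinguishing the backward from the forward diffusion-like system leaves no trace in the compatibility relation.

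Next I would integrate this overdetermined first-order system by characteristics. Using $c_\alpha(t)=\frac{\partial v}{\partial t^\alpha}=t^1\cdots\widehat{t^\alpha}\cdots t^m=v/t^\alpha$, the relation rewrites as $t^\alpha\,\frac{\partial f}{\partial t^\alpha}=t^\beta\,\frac{\partial f}{\partial t^\beta}$ (no summation) for every pair, i.e. the Euler-type quantity $t^\alpha\,\partial_\alpha f$ takes a value independent of $\alpha$. This is exactly the infinitesimal statement that $f$ is constant along all directions tangent to the level hypersurfaces $\{t^1\cdots t^m=\text{const}\}$, so the general solution must factor through the single invariant $v=t^1\cdots t^m$. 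Carrying out the characteristic computation gives $f(t,W_t)=\varphi(t^1\cdots t^m,W_t)$ for some function $\varphi$, which is the claimed volumetric dependence. A clean way to see the equivalence is the substitution $s^\alpha=\ln t^\alpha$, under which the condition becomes $\frac{\partial f}{\partial s^\alpha}=\frac{\partial f}{\partial s^\beta}$ for all $\alpha,\beta$; the $s$-gradient is then parallel to $(1,\ldots,1)$, so $f$ depends only on $s^1+\cdots+s^m=\ln v$, hence only on $v$.

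I expect the only genuinely delicate point to be the rigorous passage from the pointwise compatibility relations to the conclusion that the general solution depends on $t$ solely through $v$. This amounts to checking that the vector fields $c_\beta\partial_\alpha-c_\alpha\partial_\beta$ annihilate $v$ and span the tangent spaces of its level sets on the interior of $\mathbb{R}_+^m$, where the $c_\alpha$ do not all vanish, so that $v$ is a complete first integral of the system; this is the Frobenius/characteristics content, exactly as in the forward case. Since the reduction above makes the backward system formally identical to the forward one, the remainder of the argument is verbatim that of Theorem 1, and one concludes as before that, setting $g(v,x)\overset{def}{=}\varphi(t^1\cdots t^m,x)$, the function $g$ satisfies the backward multitime heat PDE $\frac{\partial g}{\partial v}+\frac{1}{2}\Delta g=0$.
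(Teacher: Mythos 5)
Your proposal is correct and takes essentially the same approach as the paper: from (4) you eliminate the common term $\Delta_x f$ between the equations for two indices to obtain the compatibility relations $c_{\beta}(t)\,\frac{\partial f}{\partial t^{\alpha}}\left(t,W_{t}\right)=c_{\alpha}(t)\,\frac{\partial f}{\partial t^{\beta}}\left(t,W_{t}\right)$, $\alpha\neq\beta$, and conclude that $f(t)=\varphi(t^{1}\cdots t^{m},W_{t})$. The only difference is that the paper simply asserts this last step as the general solution of the first-order system, whereas your logarithmic substitution $s^{\alpha}=\ln t^{\alpha}$ supplies the justification the paper leaves implicit.
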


\begin{proof} From (4) it follows 
\begin{equation*}
c_{\beta }(t)\,\frac{\partial f}{\partial t^{\alpha }}\left( t,W_{t}\right)
=c_{\alpha }(t)\,\frac{\partial f}{\partial t^{\beta }}\left( t,W_{t}\right)
,\,\alpha \not=\beta .
\end{equation*}%
The general solution of this PDEs system is $f(t)=\varphi (t^{1}\cdots
t^{m},W_{t}),\,t\in \mathbb{R}_{+}^{m}$.
\end{proof}

Let us take the shape of volumetric features into account, recalling that a
volumetric function is invariant under the subgroup of central equi-affine
(i.e., volume-preserving with no translation) transformations, where the
determinant of the representing matrix is $1$.

If $v=t^{1}\cdots t^{m}$ is the volume of $\Omega _{0t}\subset \mathbb{R}%
_{+}^{m}$ and $g\left( v,x\right) \overset{def}{=}\varphi \left( t^{1}\cdots
t^{m},x\right) $ has continuous partial derivatives of the first order in $v$
and of the second order in $x$, then from (\ref{1.4}) it follows that $g$
satisfies the {\it backward multitime heat PDE} 
\begin{equation}
\frac{\partial g}{\partial v}+\frac{1}{2}\Delta g =0.  \label{1.5}
\end{equation}%
Consequently, the relation (\ref{1.3}) reduces to 
\begin{equation*}
dg\left( v,W_{t}\right) =\frac{\partial g}{\partial x^a}\left( v,W_{t}\right)
dW^a_{t}.
\end{equation*}%
Thus, if $\mathbb{E}\left[ || \frac{\partial g}{\partial x}\left(
v,W_{t}\right)||^{2}\right] $ is bounded with respect to $t$, in
bounded subsets of $\mathbb{R}_{+}^{m}$, then the stochastic process $%
g\left( v,W_{t}\right) $ is differentiable with respect to $W^a_t$ and the partial 
stochastic derivative is $\frac{\partial g}{\partial x^a}\left( v,W_{t}\right) $.

Let $T$ be the maturity multitime of the option.

\begin{lemma} Let $$W^x_{t} = (W^a_t)^x,\,\,t=(t^{1},...,t^{m})\in \mathbb{R}_{+}^{m},\,x \in R^d,\,a = \overline{1,d}$$
be a family of Brownian sheets with $P\{W^x_{t} = x\}=1$ for $t \in \partial R^m_+$. If $f: R^d \to R$ is a continuous 
bounded function and $w(t,x) = E_{t,x}[f(W^x_T)]$ is the backward mean value, then $w(t,x)$ is a 
unique bounded solution of backward heat type PDEs
$$\frac{\partial w}{\partial t^\alpha} (t,x)= - \frac{1}{2}\,c_\alpha(t) \Delta_x w (t,x),\,\, t \in \stackrel{\circ}{R^m_+};\,\,w(T,x) = f(x)$$
(terminal value problem).
\end{lemma}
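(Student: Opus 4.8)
The plan is to mirror the proof of the forward Lemma, replacing the forward transition density by the corresponding backward one and exploiting the volumetric structure throughout. First I would introduce the terminal volume $V = T^{1}\cdots T^{m}$ and the running volume $v = t^{1}\cdots t^{m}$, and represent the backward mean value through the Gaussian kernel
$$\widetilde{K}(t;x,y) = \frac{1}{\bigl(2\pi(V-v)\bigr)^{d/2}}\exp\left(-\frac{\|x-y\|^{2}}{2(V-v)}\right),$$
writing $w(t,x) = \int_{\mathbb{R}^{d}} f(y)\,\widetilde{K}(t;x,y)\,dy$. The form of $\widetilde{K}$ is dictated by the It\^{o}--Udri\c{s}te rules: conditioned on $W^{x}_{t} = x$, the terminal value $W^{x}_{T}$ is Gaussian with mean $x$ and variance equal to the volume increment $V-v$, exactly as the forward density in the previous Lemma had variance $t^{1}\cdots t^{m}$.

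The central observation is that $\widetilde{K}$ depends on the multitime $t$ only through the single scalar $s = V - v$, and that as a function of $(s,x)$ it is the standard heat kernel, hence satisfies $\partial_{s}\widetilde{K} = \tfrac{1}{2}\Delta_{x}\widetilde{K}$. Since $\partial s/\partial t^{\alpha} = -\partial v/\partial t^{\alpha} = -c_{\alpha}(t)$, the chain rule gives
$$\frac{\partial \widetilde{K}}{\partial t^{\alpha}} = \frac{\partial \widetilde{K}}{\partial s}\,\frac{\partial s}{\partial t^{\alpha}} = -\frac{1}{2}\,c_{\alpha}(t)\,\Delta_{x}\widetilde{K}.$$
This sign flip, produced entirely by $\partial s/\partial t^{\alpha} = -c_{\alpha}$, is precisely what distinguishes the backward PDE from the forward one. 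Differentiating $w$ under the integral sign --- justified by the boundedness and continuity of $f$ together with the smoothness and integrability of $\widetilde{K}$ and its $x$-derivatives on $\overset{\circ}{\mathbb{R}^{m}_{+}}$, exactly as in the forward Lemma --- transfers this identity from $\widetilde{K}$ to $w$, yielding $\partial w/\partial t^{\alpha} = -\tfrac{1}{2}c_{\alpha}(t)\Delta_{x}w$.

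For the terminal condition I would let $t \to T$, so that $s = V - v \to 0$ and $\widetilde{K}(t;x,\cdot)$ becomes an approximate identity concentrating at $x$; the continuity and boundedness of $f$ then give $\lim_{t\to T} w(t,x) = f(x)$, i.e. $w(T,x) = f(x)$. Finally, uniqueness within the class of bounded solutions follows by reducing the system to the single backward heat equation $\partial_{v} g + \tfrac{1}{2}\Delta g = 0$ in the volume variable (as in Theorem 2 and relation (5)) and invoking the existence--uniqueness theorem for the associated terminal-value problem.

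The step I expect to be the main obstacle is not the differentiation itself but the rigorous identification of the backward kernel: one must verify, from the Brownian-sheet covariance encoded in the It\^{o}--Udri\c{s}te rules, that the conditional law of $W^{x}_{T}$ given $W^{x}_{t} = x$ is Gaussian with variance exactly $V - v$, since only this volume-increment form makes the chain-rule argument produce the correct coefficient $-\tfrac{1}{2}c_{\alpha}(t)$ and reconciles the terminal condition at $s=0$.
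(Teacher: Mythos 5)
Your proposal is correct and follows essentially the same route as the paper: the paper's (very terse) proof consists precisely of introducing $V = T^{1}\cdots T^{m}$, $v = t^{1}\cdots t^{m}$ and the backward transition density $L(t;x,y) = \bigl(2\pi(V-v)\bigr)^{-d/2}\exp\bigl(-\|x-y\|^{2}/(2(V-v))\bigr)$, leaving the remaining steps to be carried over mutatis mutandis from the forward Lemma. Your write-up simply makes explicit what the paper leaves implicit --- the chain rule through $s = V - v$ producing the sign flip, differentiation under the integral, the approximate-identity argument for the terminal condition, and the uniqueness appeal --- all of which match the forward Lemma's argument that the paper is invoking.
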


\begin{proof} Let $V = T^1\cdots T^m$ and $v = t^1\cdots t^m$. The transition density is now
$$L(t; x, y) = \frac{1}{(2\pi\,(V - v))^{d/2}}\exp\left(\frac{- ||x - y||^2}{2\,(V - v)}\right).$$
\end{proof}

\begin{remark} 
The heat PDE has two variants, forward and backward, with the forward one being more common.
\end{remark}

\section{Path independent stochastic curvilinear integral}

The foregoing theory and the papers \cite{cawa:75}, \cite{ud:07}, \cite{udda:11}, \cite{UDMT}
suggest to introduce the notion of multitime differentiable stochastic processes.
For that we need a multitime vectorial Wiener process 
$$W_{t} = (W^a_t),\,\,t=(t^{1},...,t^{m})\in \mathbb{R}_{+}^{m},\,a = \overline{1,d}.$$

\begin{definition}
Let $\gamma _{0t}\subset \mathbb{R}_{+}^{m}$ be an increasing curve joining
the points $0, t \in R^m_+$. A multitime stochastic process $\Phi(t)
=\Phi(t,W_t),\,\,t\in \mathbb{R}_{+}^{m}$ is called differentiable with
respect to $W_t$, on $\mathbb{R}_{+}^{m}$, if there exists a multitime vectorial
adapted mesurable process $\phi(t) =(\phi_a (t, W_t)),\,\,t\in \mathbb{R}_{+}^{m}$
such that $\mathbb{E}\left[||\phi(t)||^{2}\right]$ is bounded for $t$ in
compact sets of $\mathbb{R}_{+}^{m}$ and 
\begin{equation}
\Phi (t)=\Phi (0)+\int_{\gamma _{0t}}\phi_a (s) \,dW^a_{s},  \label{holo_proc}
\end{equation}%
as a path independent stochastic curvilinear integral.
\end{definition}

In terms of stochastic differentials, the multitime stochastic process $%
\Phi_t$ is differentiable if the stochastic system 
\begin{equation*}
d\Phi (t)=\phi_a(t) \,dW^a_{t}.
\end{equation*}
is completely integrable, i.e., $\Phi (t)$ is a function of $v = t^1\cdots t^m$ and hence 
$\phi(t)$ is a vectorial function of $v$ \cite{cawa:75}, \cite{udda:11}.

The multitime process $\phi_a(t) $ is called the {\it stochastic partial derivative} of the multitime
process $\Phi (t) $ with respect to $W_t^a$ (see also \cite{cawa:75}, \cite{do:89}).

\begin{remark} The differentiable multitime stochastic processes have
properties similar to holomorphic functions: a differentiable process has a
differentiable derivative, the curvilinear integral primitive of a
differentiable process is differentiable, and each differentiable process
admits a power series expansion.
\end{remark}

\begin{theorem} {\it Let $w(t,x) = E_{t,x}[f(W^x_T)]$ be the backward mean value. 
For any Wiener process $Y_t$, the stochastic process $w(v, Y_t)$ is differentiable.}
\end{theorem}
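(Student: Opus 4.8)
The plan is to reduce the claim to the driftless It\^{o}-Udri\c{s}te differential obtained for solutions of the backward heat PDE, and then to read off differentiability directly from the Definition. First I would invoke the backward mean value Lemma together with the backward Theorem: the backward mean value $w(t,x)=E_{t,x}[f(W_T^x)]$ is a bounded solution of the backward heat type PDEs, so by the backward Theorem it depends on $t$ only through the volume $v=t^1\cdots t^m$. Hence there is a function $g$ with $w(t,x)=g(v,x)$ obeying the backward multitime heat PDE
$$\frac{\partial g}{\partial v}+\frac12\Delta g=0.$$
This is the representation that makes the notation $w(v,Y_t)$ meaningful, and it is the structural fact on which everything rests.

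Next, for an arbitrary multitime vectorial Wiener process $Y_t$, I would apply the It\^{o}-Udri\c{s}te Lemma to $g(v,Y_t)$. Since $v=t^1\cdots t^m$ gives $\frac{\partial g}{\partial t^\alpha}=c_\alpha(t)\frac{\partial g}{\partial v}$, the coefficient of $dt^\alpha$ equals
$$\frac{\partial g}{\partial t^\alpha}+\frac12 c_\alpha(t)\,\Delta_x g=c_\alpha(t)\left(\frac{\partial g}{\partial v}+\frac12\Delta g\right),$$
which vanishes by the backward heat PDE. Therefore the differential is purely diffusive,
$$dg(v,Y_t)=\frac{\partial g}{\partial x^a}(v,Y_t)\,dY_t^a,$$
with no $dt^\alpha$ drift.

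To conclude differentiability in the sense of the Definition I would check its two requirements for the system $d\Phi=\frac{\partial g}{\partial x^a}(v,Y_t)\,dY_t^a$: complete integrability (path independence of the curvilinear integral) and the square-integrability bound $\mathbb{E}\left[\|\frac{\partial g}{\partial x}(v,Y_t)\|^2\right]<\infty$ on compact subsets of $\mathbb{R}_+^m$. The first is immediate from the volumetric character: because $g$ depends only on $v$, the coefficient $\phi_a=\frac{\partial g}{\partial x^a}(v,Y_t)$ is again a function of $v$, which is exactly the complete integrability criterion recorded just after the Definition; thus $\int_{\gamma_{0t}}\phi_a(s)\,dY_s^a$ is path independent and $w(v,Y_t)=g(0,Y_0)+\int_{\gamma_{0t}}\frac{\partial g}{\partial x^a}(s,Y_s)\,dY_s^a$, so $\phi_a(t)=\frac{\partial g}{\partial x^a}(v,Y_t)$ is the stochastic partial derivative of $w(v,Y_t)$ with respect to $Y_t^a$.

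The hard part will be the square-integrability bound. Writing $g(v,x)=\int_{R^d}f(y)\,L(t;x,y)\,dy$ with the Gaussian kernel $L$ of the backward Lemma and differentiating under the integral gives
$$\frac{\partial g}{\partial x^a}(v,x)=\int_{R^d}f(y)\,\frac{y^a-x^a}{V-v}\,L(t;x,y)\,dy,$$
so from $|f|\le M$ one gets $\|\frac{\partial g}{\partial x}\|=O\bigl((V-v)^{-1/2}\bigr)$, which is bounded on any compact set lying in the open region $\{v<V\}$. The delicate point is the degeneration of $L$ as $t\to T$, i.e. $v\to V$, where this gradient bound blows up; on compacts away from the maturity hypersurface the estimate is harmless, so I would state differentiability there, and note that extending it up to maturity would require additional regularity of $f$ (for instance Lipschitz or $C^1$ with controlled growth) to absorb the singularity. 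This singular behaviour near maturity is the principal obstacle.
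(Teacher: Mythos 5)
Your proposal follows essentially the same route as the paper: invoke the backward mean value Lemma to get the backward heat PDE for $g(v,x)$, then use the Section 3 machinery (vanishing drift in the It\^{o}-Udri\c{s}te differential, hence $dg(v,Y_t) = \frac{\partial g}{\partial x^a}(v,Y_t)\,dY^a_t$) to conclude differentiability in the sense of the Definition. In fact you go beyond the paper, whose entire proof consists of citing the Lemma and writing the PDE: your Gaussian-kernel verification of the $L^2$ bound, and your observation that it degenerates as $v \to V$ (so differentiability is only guaranteed on compacts away from maturity unless $f$ has extra regularity), checks a hypothesis that the paper silently passes over.
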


\begin{proof} Based on the previous Lemma, the backward mean value function $w(v, Y_t)$ 
is a solution of the backward multitime heat PDE (terminal value problem)
\begin{equation}
\frac{\partial g}{\partial v} + \frac{1}{2}\Delta g =0,\,\, g(T,x) = f(x).  
\end{equation}
\end{proof}

\section{Hermite polynomials and stochastic processes}

There is a special class of solutions of the backward multitime heat
equation which will be particularly interesting in stochastic problems.
These are the Hermite polynomials (see also \cite{maud}, \cite{GZ:11}). 

To develope further our theory, we need a point $t=(t^{1},...,t^{m})\in \mathbb{R}_{+}^{m}$, $v=t^{1}\cdots t^{m}$, 
a point $x = (x^1,...,x^d)\in \mathbb{R}^{d}$, 
a point $n = (n_1,...,n_d)\in N^d_0$, called multiindex, and the notations $|n| = n_1 + ... + n_d$ and $n! = n_1!\cdots n_d!$.
Also for $\xi = (\xi^1,...,\xi^d)\in R^d$, we write $\xi^n = (\xi^1)^{n_1}...(\xi^d)^{n_d}$.
To simplify, the partial derivative
$$\frac{\partial^{|n|}}{\partial_{\xi^{n_1}}...\partial_{\xi^{n_d}}}f$$
will be denoted by $\partial_{\xi^n}f$. Now, let us use the generating function expansion 
\begin{equation}
e^{<x, \xi> -\frac{1}{2}v||\xi|| ^{2}}=\sum_{n\in N_0^d}\,H_{n}(v,x)\xi^{n},\,\,x,\xi \in R^d,\,\,v\in R_{+}.
\end{equation}
Since
$$H_n(v,x) = \frac{1}{n!}\,\partial_{\xi^n}\,e^{<x, \xi> -\frac{1}{2}v||\xi|| ^{2}}|_{\xi = 0},$$
it follows that $H_{n}(v,x)$ is a polynomial of degree $n$ in $x = (x^1,...,x^d)$; it is called the {\it $n$-th Hermite polynomial}.
On the other hand, the generating function $e^{<x, \xi> -\frac{1}{2}v||\xi|| ^{2}}$ is a tensor product of $d$ one-dimensional 
generating functions of the form $e^{x \xi - \frac{1}{2}v \xi^{2}}$. Consequently
$$H_n(v; x^1,...,x^d) = H_{n_1}(v; x^1)\cdots H_{n_d}(v; x^d),$$
where $H_{n_k}$ denotes the $n_k$-th one-dimensional Hermite polynomial.

Completing the square in the definition of generating function, 
the $n^{th}$ Hermite polynomial of the variables $v \in R_+$ and $x \in R^d$ can be written as 
\begin{equation}
H_{n}\left( v,x\right) =\frac{\left( - v\right) ^{n}}{n!}e^{\frac{||x||^{2}}{2v}}%
\partial_{\xi^{n}}e^{-\frac{1}{2v}||x - v \xi||^{2}}|_{\xi = 0} = \frac{\left( - v\right) ^{n}}{n!}e^{\frac{||x||^{2}}{2v}}%
\partial_{x^{n}}e^{-\frac{||x||^{2}}{2v}}.  \label{1.6}
\end{equation}
Here and in the rest of the paper, we write $v^n = v^{n_1}\cdots v^{n_d}$.

The sequence $\left\{ H_{n}(v,\cdot )\right\} _{n=0}^{\infty }$ is a
complete orthogonal system with respect to the weight $(2\pi v)^{\frac{-1}{2}%
}\,e^{\frac{- ||x||^{2}}{2v}}$. The orthogonality means 
\begin{equation}
\mathbb{E}\left[ H_{m}\left( v,W_{t}\right) H_{n}\left( v,W_{t}\right) %
\right] =\left\{ 
\begin{array}{c}
\displaystyle0,\,\,\,\text{ if }m\neq n, \\ 
\displaystyle\frac{v^n}{n!},\,\,\text{if }m=n.%
\end{array}%
\right.   \label{1.7}
\end{equation}%

We introduce the multiindex $k_a = (0,...,k,...,0)$, with $k$ in the place $a$.
Then, using the generating function expansion, taking the partial derivatives with respect to $v$ and $x^a$, and equating the
coefficients, we find 
\begin{equation*}
\frac{\partial }{\partial v}H_{n}= - \frac{1}{2}\sum_a H_{n-2_a},\,\,\frac{\partial }{\partial x^a}H_{n}=H_{n - 1_a}.
\end{equation*}

Consequently each Hermite polynomial $H_{n}(v,x)$ is a solution of the
backward multitime heat PDE. Thus

\begin{theorem} Each Hermite polynomial $H_n(v,W_t)$ is a
differentiable process and its partial derivative with respect to $W^a_t$ is $H_{n - 1_a}(v,W_t)$.
\end{theorem}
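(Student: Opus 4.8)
The plan is to apply the It\^{o}-Udri\c{s}te Lemma, in the form (\ref{1.3}), to the process $g(v, W_t) = H_n(v, W_t)$ and to show that its satisfying the backward multitime heat PDE forces the whole $dt^\alpha$-drift to vanish, so that only the stochastic term $H_{n-1_a}(v, W_t)\,dW^a_t$ survives. Matching this against the definition of a differentiable multitime process then identifies $H_{n-1_a}$ as the stochastic partial derivative.

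First I would record that $H_n(v, W_t)$ depends on the multitime only through the volume $v = t^1\cdots t^m$, so that the chain rule together with $c_\alpha(t) = \frac{\partial v}{\partial t^\alpha}$ gives $\frac{\partial}{\partial t^\alpha} H_n = c_\alpha(t)\,\frac{\partial H_n}{\partial v}$. Substituting this into (\ref{1.3}), the coefficient of $dt^\alpha$ collapses to $c_\alpha(t)\left(\frac{\partial H_n}{\partial v} + \frac{1}{2}\Delta_x H_n\right)$. Since the recurrences $\frac{\partial}{\partial v}H_n = -\frac{1}{2}\sum_a H_{n-2_a}$ and $\frac{\partial}{\partial x^a}H_n = H_{n-1_a}$ (established just above) show that $H_n$ solves the backward heat PDE $\frac{\partial H_n}{\partial v} + \frac{1}{2}\Delta H_n = 0$, this drift is identically zero. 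Using the second recurrence for the diffusion coefficient, I would conclude $dH_n(v, W_t) = H_{n-1_a}(v, W_t)\,dW^a_t$, where as usual any $H_{n-1_a}$ with a negative index component is taken to be $0$.

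Next I would check the integrability condition built into the definition of differentiability, namely that $\mathbb{E}\left[||\phi(t)||^2\right]$ with $\phi_a = H_{n-1_a}$ is bounded on compact subsets of $\mathbb{R}_+^m$. This is immediate from the orthogonality relation (\ref{1.7}): it yields $\mathbb{E}\left[H_{n-1_a}(v, W_t)^2\right] = \frac{v^{n-1_a}}{(n-1_a)!}$, whence $\mathbb{E}\left[||\phi(t)||^2\right] = \sum_a \frac{v^{n-1_a}}{(n-1_a)!}$, a polynomial in $v$ that is clearly bounded wherever $v$ is bounded.

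Finally, since both $H_n(v, W_t)$ and each $H_{n-1_a}(v, W_t)$ depend on $t$ only through $v$, the stochastic system $d\Phi = \phi_a\,dW^a_t$ is completely integrable and the curvilinear integral $\int_{\gamma_{0t}} H_{n-1_a}(s)\,dW^a_s$ is path independent, so $H_n(v, W_t)$ indeed fits the definition of a differentiable process with derivative $H_{n-1_a}(v, W_t)$ relative to $W^a_t$. I expect no serious obstacle here: the one point of substance is recognizing that the backward multitime heat PDE is exactly the identity annihilating the It\^{o}-Udri\c{s}te drift, after which the derivative formula and the $L^2$ bound are routine consequences of the Hermite recurrences and orthogonality.
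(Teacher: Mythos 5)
Your proof is correct and follows essentially the same route as the paper: the Hermite recurrences $\frac{\partial}{\partial v}H_n = -\frac{1}{2}\sum_a H_{n-2_a}$ and $\frac{\partial}{\partial x^a}H_n = H_{n-1_a}$ show that $H_n$ solves the backward multitime heat PDE, which annihilates the drift in the It\^{o}-Udri\c{s}te formula and leaves $dH_n = H_{n-1_a}\,dW^a_t$, exactly the mechanism the paper invokes via its Section 3 criterion. You merely make explicit two steps the paper leaves implicit (the chain-rule collapse of the $dt^\alpha$ coefficient and the $L^2$ bound $\mathbb{E}\bigl[H_{n-1_a}(v,W_t)^2\bigr] = \frac{v^{n-1_a}}{(n-1_a)!}$ from the orthogonality relation), which is a sound completion rather than a different argument.
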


It follows that finite sums of Hermite polynomials processes are
differentiable processes. We extend this statement to series of Hermite
polynomials, following the ideas of Cairoli and Walsh \cite{cawa:75}.

\begin{theorem}
Suppose $\left\{ a_{n}\right\}$ is a multiindex family of real
numbers such that%
\begin{equation*}
\sum_{n\in N^d_0 }a_{n}^{2}\,\,\frac{v^n}{n!}<\infty,\,\, \text{ for all 
}v>0\,\text{(convergent series)} \text{.}
\end{equation*}%
Then, the process $\Phi_t$ defined by%
\begin{equation}
\Phi _{t}=\sum_{n\in N^d_0 } a_{n}H_{n}(v,W_t)  \label{1.10}
\end{equation}%
is differentiable relative to $W_t$ and its partial derivative $\phi_{ta}$ with respect to $W^a_t$ is given
by 
\begin{equation}
\phi _{ta}=\sum_{n - 1_a\in N^d_0} a_{n}H_{n - 1_a}( v,W_t).  \label{1.11}
\end{equation}%
The convergence of series is understood in $L^{2}$.
\end{theorem}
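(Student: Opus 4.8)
The plan is to realize $\Phi_t$ as an $L^2$-limit of its partial sums and to transfer the differentiability, known for each partial sum, to the limit. Write $\Phi_t^{(N)} = \sum_{|n|\le N} a_n H_n(v,W_t)$. By the preceding Theorem together with the fact that finite sums of differentiable processes are differentiable, each $\Phi_t^{(N)}$ is a differentiable process whose partial derivative with respect to $W_t^a$ is $\phi_{ta}^{(N)} = \sum_{|n|\le N} a_n H_{n-1_a}(v,W_t)$ (the terms with $n_a=0$ vanishing). First I would check that $\{\Phi_t^{(N)}\}$ is Cauchy in $L^2$: by orthogonality of the Hermite processes, for $M<N$ one has $\mathbb{E}[(\Phi_t^{(N)}-\Phi_t^{(M)})^2] = \sum_{M<|n|\le N} a_n^2\,\frac{v^n}{n!}$, which tends to $0$ by the summability hypothesis. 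Hence $\Phi_t$ is a well-defined element of $L^2$.

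The heart of the matter is to show that the candidate derivative series converges in $L^2$ and to control its norm. Reindexing by $m=n-1_a$ and applying orthogonality gives $\mathbb{E}[(\phi_{ta})^2] = \sum_m a_{m+1_a}^2\,\frac{v^m}{m!} = \frac{1}{v}\sum_{n_a\ge 1} n_a\,a_n^2\,\frac{v^n}{n!}$, where I used $\frac{v^m}{m!} = \frac{n_a}{v}\,\frac{v^n}{n!}$ with $n=m+1_a$. The hard part is exactly the extra factor $n_a$ produced by differentiation, which is not controlled by the hypothesis at a single $v$. This is where the assumption ``for all $v>0$'' is decisive: fixing $v'>v$ and setting $r=v/v'\in(0,1)$, the elementary bound $n_a\,r^{|n|}\le |n|\,r^{|n|}\le C(r)$ (the quantity $k\,r^{k}$ being bounded in $k$ for $0<r<1$) yields $\sum_n n_a\,a_n^2\,\frac{v^n}{n!}\le C(r)\sum_n a_n^2\,\frac{(v')^n}{n!}<\infty$. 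Choosing a single $v'$ larger than the right endpoint of a compact interval $[v_0,V]\subset(0,\infty)$ makes this estimate uniform, so that $\mathbb{E}[||\phi(t)||^2] = \frac{1}{v}\sum_n |n|\,a_n^2\,\frac{v^n}{n!}$ is bounded on compact subsets of $\mathbb{R}_+^m$, which is precisely the integrability condition required in the Definition. The same computation shows that $\{\phi_{ta}^{(N)}\}$ is Cauchy in $L^2$, with limit $\phi_{ta}$.

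Finally I would pass to the limit in the curvilinear-integral representation $\Phi_t^{(N)} = \Phi_0^{(N)} + \int_{\gamma_{0t}}\phi_{sa}^{(N)}\,dW_s^a$. By the It\^{o}-type $L^2$-isometry for the multitime stochastic curvilinear integral along increasing curves (cf. \cite{cawa:75}, \cite{udda:11}), the convergence $\phi_{ta}^{(N)}\to\phi_{ta}$ in $L^2$, uniform on compacts by the bound above, forces the integrals to converge in $L^2$ to $\int_{\gamma_{0t}}\phi_{sa}\,dW_s^a$; since the left-hand sides converge to $\Phi_t-\Phi_0$, we obtain $\Phi_t = \Phi_0 + \int_{\gamma_{0t}}\phi_{sa}\,dW_s^a$. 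Path independence passes to the limit as well: each $H_n(v,W_t)$ is volumetric, so every partial sum is a function of $v$ alone and satisfies the complete-integrability condition of the Definition, a property preserved under $L^2$-limits. This identifies $\phi_{ta}$ as the stochastic partial derivative of $\Phi_t$ with respect to $W_t^a$ and completes the argument. I expect the two genuine obstacles to be the factor-$n_a$ estimate of the second paragraph and the justification of interchanging the limit with the stochastic curvilinear integral.
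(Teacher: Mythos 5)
Your proposal is correct, and its skeleton is the same as the paper's: truncate the Hermite series, use the orthogonality relations to get $L^{2}$-Cauchy estimates for the partial sums, and then use the It\^{o}-type isometry for the stochastic curvilinear integral to interchange the $L^{2}$-limit with the integral. The one genuine divergence is how the derivative series is controlled, and it is worth spelling out. The paper never estimates $\mathbb{E}\left[\phi_{ta}^{2}\right]$ directly: it merely asserts that convergence of $\sum_{n} a_{n}^{2}\,v^{n}/n!$ implies ``the same for the series (\ref{1.11})'', and then, for the interchange step, computes exactly via the isometry that
$\mathbb{E}\bigl[\bigl(\int_{\gamma_{0t}}(\phi_{sa}-\phi_{sa}^{(|p|)})\,dW_{s}^{a}\bigr)^{2}\bigr]$
equals an integral in the volume variable of the tail $\sum_{|n|>|p|}a_{n}^{2}\,v^{n-1_{a}}/(n-1_{a})!$, and this integration restores exactly the missing factor, giving back the tail $\sum_{|n|>|p|}a_{n}^{2}\,v^{n}/n!$ of the hypothesis series; so for that step no enlarged volume $v'$ is needed. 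You instead attack the convergence of $\phi_{ta}$ head-on: the reindexing identity $\frac{v^{n-1_{a}}}{(n-1_{a})!}=\frac{n_{a}}{v}\,\frac{v^{n}}{n!}$ produces the factor $n_{a}$, which the hypothesis at a single $v$ cannot absorb, and you absorb it by invoking the hypothesis at some $v'>v$ together with the elementary bound $n_{a}(v/v')^{|n|}\leq C(v/v')$. This is precisely the point the paper glosses over --- the bare assertion that the derivative series ``also converges'' does require the full strength of the ``for all $v>0$'' assumption, and your estimate is the honest justification of it; it also directly yields the boundedness of $\mathbb{E}\left[\|\phi(t)\|^{2}\right]$ on compacts demanded by the Definition, which the paper never verifies explicitly. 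Conversely, the paper's exact-integration identity is the slicker route for the limit--integral interchange, since the integrated tail coincides with the original tail and no uniform-on-compacts domination argument is needed. Both routes are valid; yours is more careful where the paper is terse, the paper's is more economical where you invoke domination.
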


\begin{proof}
By the orthogonality relations (\ref{1.7}), we find 
\begin{equation*}
\mathbb{E}\left[ \left(\sum_{|n| = 0 }^{|p|}a_{n}H_{n}\left( v,W_t\right) \right)
^{2}\right] =\sum_{|n| = 0 }^{|p|}a_{n}^{2}\,\,\frac{v^n}{n!}\,.
\end{equation*}%
This mean value is bounded due to the convergence of the series in the right
hand member. It follows that the series (\ref{1.10}) converges in $L^{2}$
and the same is true for the series (\ref{1.11}). Consider now the sequence
of partial sums 
\begin{equation*}
\phi _{ta}^{(|p|)}=\sum_{|n - 1_a| = 0}^{|p|}a_{n}H_{n-1_a}\left( v,W_t\right),
\end{equation*}%
and let%
\begin{equation*}
\Phi _{t}^{(|p|)}=a_{0}+\int_{\gamma _{0t}^{\alpha }}\phi ^{(|p|)}_{sa}\,dW_s^a
=a_{0}+\sum_{|n -1_a| = 0}^{|p|}a_{n}H_{n}\left( v,W_t\right),
\end{equation*}%
where 
\begin{equation*}
\gamma _{0t}^{\alpha }:\, t^{1}=c^{1},...,t^{\alpha -1}=c^{\alpha -1},
t^{\alpha }=\tau ^{\alpha }\in \left[ 0,t^{\alpha }\right], t^{\alpha
+1}=c^{\alpha +1},...,t^{m}=c^{m}, 
\end{equation*}
is an increasing curve joining the points $0$ and $t$, with $c^\beta = const >0,\,\beta\not= \alpha$.
It follows 
\begin{equation*}
\underset{|p|\rightarrow \infty }{\lim }\Phi _{t}^{(|p|)}=\Phi _{t}\text{ in }%
L^{2}\text{.}
\end{equation*}%
To finish the proof, we need only check that 
\begin{equation*}
\underset{|p|\rightarrow \infty }{\lim }\int_{\gamma _{0t}^{\alpha }}\phi
^{(|p|)}_{sa}\,dW_s^a=\int_{\gamma _{0t}^{\alpha }}\underset{|p|\rightarrow \infty }{\lim 
}\phi ^{(|p|)}_{sa}\,dW_s^a\text{, }\alpha =\overline{1,m}.
\end{equation*}%
Again, by the orthogonality relation (\ref{1.7}), we have 
\begin{equation*}
\mathbb{E}\left[ \left( \phi _{ta}-\phi _{ta}^{(|p|)}\right) ^{2}\right]
=\sum_{|n|=|p|+1}^{\infty }a_{n}^{2}\,\,\frac{v^{n - 1_a}}{\left(n - 1_a\right) !}\,,
\end{equation*}%
and consequently 
\begin{equation*}
\mathbb{E}\left[ \left( \int_{\gamma _{0t}}\left( \phi_{sa} -\phi
^{(|p|)}_{sa}\right)\, dW^a_s\right) ^{2}\right] =
\end{equation*}%
\begin{equation*}
=\int_0^v\sum_{|n|=|p|+1}^{\infty }a_{n}^{2}\,\,\frac{
v^{n_1}\cdots v^{n_a -1} \cdots v^{n_d}}{\left( n -1_a\right) !}\,\,dv_a=\sum_{|n|=|p|+1}^{\infty }a_{n}^{2}\,\,\frac{v^n}{n!}.
\end{equation*}%
In other words, 
\begin{equation*}
\underset{|p|\rightarrow \infty }{\lim }\int_{\gamma _{0t}}\phi
^{(|p|)}_{sa}\,\,dW^a_s=\int_{\gamma _{0t}}\phi_{sa} \,\,dW^a_s.
\end{equation*}
\end{proof}

\begin{theorem}
Suppose that $\varphi \left( v,x\right) $ has continuous partial derivatives
of the first order in $v$ and of the second order in $x$, and that $\left\{
\varphi \left( v,W_{t}\right) \right\} $ is a differentiable process, where $%
v=t^{1}\cdots t^{m}$. Then, for each $t\in \mathbb{R}_{+}^{m}$, we can write 
\begin{equation*}
\varphi \left( v,W_{t}\right) =\sum_{|n|=0}^{\infty }a_{n}H_{n}\left(
v,W_{t}\right) ,
\end{equation*}%
where the convergence is understood in $L^{2}$ and, for $t=(t^{1},...,t^{m})%
\in \mathbb{R}_{+}^{m}$,%
\begin{equation}
a_{n}=\frac{n!}v^n\,\,\mathbb{E}\left[ \varphi \left( v,W_{t}\right)
H_{n}\left( v,W_{t}\right) \right] .  \label{1.13}
\end{equation}
\end{theorem}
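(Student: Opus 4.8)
The plan is to read the claimed identity as the Fourier--Hermite expansion of $\varphi(v,\cdot)$ in the Hilbert space $L^2(\mathbb{R}^d,p_v\,dx)$, where $p_v(x)=(2\pi v)^{-d/2}e^{-\|x\|^2/(2v)}$ is the density of $W_t\sim N(0,vI_d)$, and then to prove that the coefficients this expansion produces are genuinely constant in $v$. For a fixed $v>0$ almost everything is automatic: the random variable $\varphi(v,W_t)$ lies in $L^2$, and by the completeness and orthogonality of $\{H_n(v,\cdot)\}$ recalled just before \eqref{1.7},
\[
\varphi(v,\cdot)=\sum_{n} a_n(v)\,H_n(v,\cdot)\ \text{ in }L^2,
\qquad a_n(v)=\frac{n!}{v^n}\,\mathbb{E}\big[\varphi(v,W_t)H_n(v,W_t)\big],
\]
while Parseval's identity gives $\sum_n a_n(v)^2\,v^n/n!=\mathbb{E}[\varphi(v,W_t)^2]<\infty$. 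Thus the expansion and its $L^2$ convergence hold for each separate $v$, and the real content of the theorem is that the projection coefficients $a_n(v)$ do not depend on $v$, so that one family of constants serves simultaneously for all $t$.

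To isolate that point I would first rewrite $a_n(v)$ in a form free of the Hermite factor. Multiplying the Rodrigues-type formula \eqref{1.6} by $p_v$ gives the identity $H_n(v,x)\,p_v(x)=\tfrac{(-v)^n}{n!}\,\partial_{x^n}p_v(x)$; substituting this into \eqref{1.13} and integrating by parts $|n|$ times, the two sign factors cancel and I obtain
\[
a_n(v)=\int_{\mathbb{R}^d}(\partial_{x^n}\varphi)(v,x)\,p_v(x)\,dx=\mathbb{E}\big[(\partial_{x^n}\varphi)(v,W_t)\big].
\]
The differentiability hypothesis is what makes this representation useful: as established in Section 3, the differentiability of $\varphi(v,W_t)$ forces $\varphi$ to satisfy the backward multitime heat PDE $\partial_v\varphi+\tfrac12\Delta_x\varphi=0$, whereas the weight $p_v$ is the heat kernel and satisfies the forward equation $\partial_v p_v=\tfrac12\Delta_x p_v$.

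With these two PDEs in hand I would differentiate $a_n(v)$ under the integral sign,
\[
\frac{d}{dv}a_n(v)=\int(\partial_v\partial_{x^n}\varphi)\,p_v\,dx+\int(\partial_{x^n}\varphi)\,\partial_v p_v\,dx .
\]
Using $\partial_v\partial_{x^n}\varphi=\partial_{x^n}\partial_v\varphi=-\tfrac12\Delta_x(\partial_{x^n}\varphi)$ in the first integral, and $\partial_v p_v=\tfrac12\Delta_x p_v$ followed by two integrations by parts in the second, both integrals reduce to $\mp\tfrac12\int\Delta_x(\partial_{x^n}\varphi)\,p_v\,dx$ and cancel. Hence $a_n(v)$ is constant, and the expansion $\varphi(v,W_t)=\sum_n a_nH_n(v,W_t)$ holds in $L^2$ with the stated constants for every $t\in\mathbb{R}_+^m$.

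The main obstacle is analytic rather than algebraic: each integration by parts and the differentiation under the integral sign must be justified, that is, the boundary terms at infinity must vanish and the differentiated integrands must be dominated uniformly near every $v$. The Gaussian decay of $p_v$ and its derivatives does most of the work, but one also needs a sub-Gaussian growth control on $\varphi$ and its relevant derivatives which is not explicitly among the hypotheses; I would either add such a mild growth assumption or extract it from the requirement $\varphi(v,W_t)\in L^2$ together with the heat equation. Completeness of the Hermite system, quoted before \eqref{1.7}, is used as a black box and is the only non-elementary ingredient in the convergence half of the argument.
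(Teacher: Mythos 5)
Your proof is essentially correct in outline, but it takes a genuinely different route from the paper, and at one point it uses more smoothness than the theorem grants. The paper's proof is the abstract orthogonal-expansion argument run in reverse: it defines $\Phi_t=\sum_n a_nH_n(v,W_t)$ with the coefficients \eqref{1.13}, asserts the convergence $\sum_n a_n^2\,v^n/n!<\infty$ (your Bessel/Parseval observation is the justification the paper omits), invokes the preceding theorem on Hermite series to get $L^2$ convergence and differentiability of $\Phi_t$, computes $\mathbb{E}[\Phi_tH_n(v,W_t)]=a_n v^n/n!$ by orthogonality \eqref{1.7}, notes that this equals $\mathbb{E}[\varphi(v,W_t)H_n(v,W_t)]$ by the very definition of $a_n$, and concludes $\varphi(v,W_t)-\Phi_t\equiv 0$ from completeness. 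What the paper never verifies is precisely the point you identify as the real content: that the right-hand side of \eqref{1.13} is independent of $t$. This independence is tacitly assumed when the preceding theorem is applied, since that theorem requires a family of \emph{constants} $a_n$; indeed, the paper's written proof never uses the differentiability hypothesis on $\varphi(v,W_t)$ at all. Your heat-equation argument for $\tfrac{d}{dv}a_n(v)=0$ supplies exactly this missing step, using the differentiability hypothesis in an essential way (through the backward heat PDE of Section 3), so on this point your proof is more faithful to the statement than the paper's own.

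The gap is in how you execute that step. Your intermediate representation $a_n(v)=\mathbb{E}\bigl[(\partial_{x^n}\varphi)(v,W_t)\bigr]$ is obtained by integrating by parts $|n|$ times, and the subsequent differentiation produces $\Delta_x(\partial_{x^n}\varphi)$; altogether this needs $\varphi(v,\cdot)\in C^{|n|+2}$, whereas the hypothesis provides only continuous second-order derivatives in $x$. As written, the cancellation argument is legitimate only for $n=0$. The repair is to keep all the $x$-derivatives on the Gaussian side: from the Rodrigues formula \eqref{1.6} one has $H_n(v,x)\,p_v(x)=\tfrac{(-v)^n}{n!}\partial_{x^n}p_v(x)$ with $p_v(x)=(2\pi v)^{-d/2}e^{-\|x\|^2/(2v)}$, hence
\begin{equation*}
a_n(v)=(-1)^{|n|}\int_{\mathbb{R}^d}\varphi(v,x)\,\partial_{x^n}p_v(x)\,dx .
\end{equation*}
Differentiating under the integral and substituting $\partial_v\varphi=-\tfrac12\Delta_x\varphi$ and $\partial_v\partial_{x^n}p_v=\tfrac12\Delta_x\partial_{x^n}p_v$ gives
\begin{equation*}
\frac{d}{dv}a_n(v)=(-1)^{|n|}\left(-\frac12\int_{\mathbb{R}^d}\Delta_x\varphi\,\partial_{x^n}p_v\,dx
+\frac12\int_{\mathbb{R}^d}\varphi\,\Delta_x\partial_{x^n}p_v\,dx\right),
\end{equation*}
and now only \emph{two} integrations by parts, moving the Laplacian off $\partial_{x^n}p_v$ onto $\varphi$, are needed to cancel the two terms; this uses exactly the $C^{1,2}$ regularity assumed in the theorem. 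With that modification, together with the domination and decay-at-infinity caveats you already flag (which the paper itself passes over in silence, as it does in the proof of Lemma 2.2), your argument is complete.
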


\begin{proof}
Define the process $\Phi _{t}=\sum_{|n| = 0}^{\infty }a_{n}H_{n}\left(v,W_t\right)$. Due to the convergence 
\begin{equation*}
\sum_{|n| =0}^{\infty }a_{n}^{2}\,\,\frac{v^n}{n!}<\infty ,
\end{equation*}%
the series defining $\Phi_t$ converges in $L^{2}$ and the process $\Phi_t$ is
differentiable.

Since $H_{n}$ is an orthogonal sequence, we get 
\begin{equation*}
\mathbb{E}\left[ \Phi _{t}H_{n}\left( v,W_t\right) \right] =\mathbb{E}\left[
a_{n}H_{n}^{2}\left( v,W_t\right) \right] =a_{n}\,\,\frac{v^n}{n!}.
\end{equation*}%
But, by hypothesis,%
\begin{equation*}
\mathbb{E}\left[\varphi\left( v,W_t\right) H_{n}\left( v,W_t\right) \right] =%
\frac{v^n}{n!}\,\,a_{n}.
\end{equation*}%
Thus,%
\begin{equation*}
\mathbb{E}\left[ \left(\varphi\left( v,W_t\right) -\Phi _{t}\right)
H_{n}\left(v,W_t\right) \right] =0,
\end{equation*}%
for all $n\in \mathbb{N^{d}_{0}}$. Then, $\varphi\left( v,W_t\right) -\Phi_{t}\equiv 0$.
\end{proof}

\section{Union of Tzitzeica hypersurfaces}

A hypersurface $M\subset \mathbb{R}_{+}^{m},\,m\geq 3,$ is called \textit{%
Tzitzeica hypersurface} \cite{tz}, \cite{tzi}, provided there exists a constant $a\in \mathbb{R}$
such that we have $K=a\,d^{m+1}$, for all points $t=(t^{1},...,t^{m})\in M$,
where $K$ is the Gauss curvature of the hypersurface and $d$ is the distance
from the origin of the space to the tangent hyperplane to the hypersurface
at the current point $t$. Since the Gauss curvature $K$ describes the shape
of the hypersurface, a Tzitzeica hypersurface has a bending against the
tangent hyperplane in fixed proportion to the normal component of the
position vector $t$. The most simple Tzitzeica hypersurfaces are the
constant level sets $M_{c}:t^{1}\cdots t^{m}=c$ in $\mathbb{R}^{m}$ ($2^{m-1}$
connected components).

\begin{remark} The Gauss curvature of a Cartesian implicit surface 
\begin{equation*}
M_{c}:F(t^{1},t^{2},t^{3})=c
\end{equation*}%
in $\mathbb{R}^{3}$ is 
\begin{equation*}
K=\left[
[F_{3}(F_{3}F_{11}-2F_{1}F_{13})+F_{1}^{2}F_{33}][F_{3}(F_{3}F_{22}-2F_{2}F_{23})+F_{2}^{2}F_{33}\right. 
\end{equation*}%
\begin{equation*}
-\left. (F_{3}(-F_{1}F_{23}+F_{3}F_{12}-F_{y}F_{13})+F_{1}F_{2}F_{33})^{2}
\right] [F_{3}^{2}(F_{1}^{2}+F_{2}^{2}+F_{3}^{2})^{2}]^{-1}.
\end{equation*}%
The surface $M_{c}$ is curving like a paraboloid if $K(t)>0$, hyperboloid if 
$K(t)<0$, or a cylinder or plane if $K(t)=0$, near a point $%
t=(t^{1},t^{2},t^{3})\in M_{c}$.
\end{remark}

Let us show that the constant level sets of the functions $\varphi
(t^{1}\cdots t^{m},x)$, with respect to $t=(t^{1},...,t^{m})\in \mathbb{R}%
_{+}^{m}$, are Tzitzeica hypersurfaces or unions of simple Tzitzeica
hypersurfaces in $\mathbb{R}_{+}^{m}$ indexed by the points $x$.

\begin{theorem} \textit{If $\varphi :\mathbb{R}\rightarrow $}$\mathbb{R}$%
\textit{\ is a $C^{2}$ nonconstant function and $c$ is a noncritical value
of $\varphi $, then the constant level set $N_{c}:\varphi (t^{1}\cdots
t^{m})=c$ is a union of simple Tzitzeica hypersurfaces.}
\end{theorem}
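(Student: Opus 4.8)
The plan is to reduce the assertion to the fact, already recorded at the beginning of this section, that every level set $t^{1}\cdots t^{m}=\text{const}$ is a simple Tzitzeica hypersurface; the whole content is then a decomposition of $N_{c}$ into such pieces, indexed by the preimage $\varphi ^{-1}(c)\subset \mathbb{R}$.

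First I would examine the composite $\Psi (t)=\varphi (v)$, where $v=t^{1}\cdots t^{m}$. The chain rule gives $\dfrac{\partial \Psi }{\partial t^{\alpha }}=\varphi ^{\prime }(v)\,c_{\alpha }(t)$, and on $\mathbb{R}_{+}^{m}$ each factor $c_{\alpha }(t)=\prod_{\beta \neq \alpha }t^{\beta }$ is strictly positive. Since $c$ is a noncritical value of $\varphi $, we have $\varphi ^{\prime }(v)\neq 0$ at every $v$ with $\varphi (v)=c$; hence $\nabla \Psi \neq 0$ along $N_{c}$. Thus $c$ is a regular value of $\Psi $ and $N_{c}$ is a genuine smooth hypersurface of $\mathbb{R}_{+}^{m}$.

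Next I would use the noncritical hypothesis to describe $\varphi ^{-1}(c)$. At each $s$ with $\varphi (s)=c$ one has $\varphi ^{\prime }(s)\neq 0$, so by the inverse function theorem $s$ is an isolated point of $\varphi ^{-1}(c)$; any accumulation point $s_{\ast }$ would satisfy $\varphi (s_{\ast })=c$ by continuity and $\varphi ^{\prime }(s_{\ast })\neq 0$ by hypothesis, contradicting isolation. Therefore $\varphi ^{-1}(c)=\{s_{j}\}_{j}$ is a discrete, locally finite, at most countable set, and in $\mathbb{R}_{+}^{m}$ only the values $s_{j}>0$ are actually attained by $v=t^{1}\cdots t^{m}$.

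The decomposition is then immediate: $N_{c}=\{t\in \mathbb{R}_{+}^{m}:v\in \varphi ^{-1}(c)\}=\bigcup_{s_{j}>0}\{t:t^{1}\cdots t^{m}=s_{j}\}=\bigcup_{s_{j}>0}M_{s_{j}}$, a disjoint union since the $s_{j}$ are distinct. Each $M_{s_{j}}:t^{1}\cdots t^{m}=s_{j}$ is a simple Tzitzeica hypersurface, i.e. it satisfies $K=a\,d^{m+1}$, by the curvature identity for the product level sets invoked earlier, which I would cite rather than recompute. This yields the claim. The only genuinely delicate ingredient is precisely that curvature identity $K=a\,d^{m+1}$ for $t^{1}\cdots t^{m}=\text{const}$ (the Gauss curvature of the product surface being homogeneous of the degree matched to $d^{m+1}$); granting it, the remainder is a routine unpacking of the regular-value condition, so I expect no substantial obstacle beyond keeping the $\mathbb{R}_{+}^{m}$ sign bookkeeping straight.
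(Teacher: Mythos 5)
Your proposal is correct and follows essentially the same route as the paper: decompose $N_{c}$ as the union, over the solution set $A_{c}=\varphi^{-1}(c)$, of the level sets $t^{1}\cdots t^{m}=k$, each of which is a simple Tzitzeica hypersurface, with the noncritical-value hypothesis guaranteeing the pieces are genuine hypersurfaces. Your added details (the chain-rule regular-value check $\nabla\Psi=\varphi'(v)\,c_{\alpha}(t)\neq 0$ and the discreteness of $\varphi^{-1}(c)$) merely make explicit what the paper's terse proof leaves implicit.
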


\begin{proof}. Let $A_c$ the set of the solutions of the equation $\varphi
(v) = c$ and $v = t^1\cdots t^m$. Then the constant level set $N_c$ is the
union of the constant level sets $t^1\cdots t^m = k,\,\,k\in A_c$. If $c$ is
not a critical value of $\varphi$, then the set $N_c$ is a union of
hypersurfaces.
\end{proof}

\begin{remark} If $c$ is a critical value of $\varphi$, then the set $N_c$
is a union of constant level sets $t^1\cdots t^m = k,\,\,k\in A_c$, but
those level sets which correspond to $\varphi^{\prime}(k) = 0$ are not
hypersurfaces.
\end{remark}

\begin{theorem} Each section $x=c$ of the hypersurface $H_{n}(v,x)=0$ 
in $\mathbb{R}^{m+d}=\{(t,x)\}$ is a union of
Tzitzeica hypersurfaces in $\mathbb{R}_{+}^{m}$.
\end{theorem}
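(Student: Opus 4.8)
The plan is to reduce the statement to the volumetric level-set theorem proved earlier in this section. First I would fix the section by setting $x = c$ and introducing the one-variable function $\psi(v) := H_n(v, c)$. From the generating-function expansion and the Rodrigues-type representation of $H_n$, each Hermite polynomial $H_n(v,x)$ is a genuine polynomial in the pair $(v,x)$, of total degree $|n|$ in $x$ and of degree at most $\lfloor |n|/2\rfloor$ in $v$; hence its restriction $\psi$ is a polynomial in the single variable $v$, so in particular of class $C^2$. With this notation, the section $\{x = c\}\cap\{H_n(v,x) = 0\}$, viewed inside the multitime space $\mathbb{R}_+^m = \{t\}$, coincides exactly with the constant level set $N_0 : \psi(t^1\cdots t^m) = 0$.

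Next I would apply the earlier level-set theorem with $\varphi = \psi$ and level value $0$. Granting that $\psi$ is nonconstant and that $0$ is a noncritical value of $\psi$, that theorem gives immediately that $N_0$ is a union of simple Tzitzeica hypersurfaces. Concretely, writing $A_0 = \{k > 0 : \psi(k) = 0\}$ for the finite set of positive roots of the polynomial $\psi$, one has $N_0 = \bigcup_{k \in A_0} M_k$, where $M_k : t^1\cdots t^m = k$ is one of the simple Tzitzeica hypersurfaces singled out in this section, each contributing its $2^{m-1}$ connected components. Thus the section is a finite union of Tzitzeica hypersurfaces, as claimed.

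The only delicate points, and where I expect the main obstacle to lie, are the two hypotheses that feed the earlier theorem. If all components satisfy $n_a \le 1$ then $\psi$ carries no $v$-dependence (for example $H_{1_a}(v,x) = x^a$), so $\psi$ is constant and the section degenerates to $\emptyset$ or to all of $\mathbb{R}_+^m$; these trivial cases sit outside the intended scope and are removed by the nonconstancy assumption. The genuine difficulty is ensuring that $0$ is a noncritical value of $\psi$, i.e. that the polynomial $\psi$ has no repeated positive root: at a multiple root $k$ one has $\psi'(k) = 0$, and the associated $M_k$ then fails to be a smooth hypersurface. For a generic section value $c$ the roots of $\psi$ are simple and the conclusion holds verbatim; for the exceptional $c$ producing a multiple root, the Remark following the level-set theorem applies and the resulting union contains the degenerate level sets that are no longer hypersurfaces. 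Beyond this noncriticality check, the proof is a direct specialization of the earlier result and needs no further computation.
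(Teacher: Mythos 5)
Your proposal is correct and matches the paper's approach exactly: the paper in fact prints this theorem with no proof at all, treating it as an immediate specialization of the preceding level-set theorem to $\varphi(v)=H_{n}(v,c)$ with level value $0$, which is precisely the reduction you carry out. Your extra care about the two hypotheses (nonconstancy of $\psi(v)=H_{n}(v,c)$ and noncriticality of the value $0$, i.e.\ simplicity of the positive roots) addresses a point the paper silently glosses over; the degenerate cases you flag are covered only by the paper's Remark on critical values, not by the theorem as stated.
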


The constant level set 
\begin{equation*}
\mathbb{E}\left[ H_{n}\left( v,W_t\right) H_{n}\left( v,W_t\right)\right] =
c > 0
\end{equation*}
is a simple Tzitzeica hypersurface.


\end{document}